\theoremstyle{definition}
\newtheorem{definition}{Definition}[section]
\theoremstyle{plain}
\newtheorem*{thmA}{Theorem A}
\newtheorem*{thmB}{Theorem B}
\newtheorem{theorem}[definition]{Theorem}
\newtheorem{lemma}[definition]{Lemma}
\newtheorem{proposition}[definition]{Proposition}
\newcommand{\F}{\mathbb{F}}
\newcommand{\N}{\mathbb{N}}
\newcommand{\maxG}{\max\nolimits_{G}\hspace{.09 em}}
\begin{document}

\title[Commutators in finite $p$-groups]{Commutators in finite $p$-groups with $2$-generator derived subgroup} 

\author[I.\ de las Heras]{Iker de las Heras}
\address{Department of Mathematics\\ University of the Basque Country UPV/EHU\\
48080 Bilbao, Spain}
\email{iker.delasheras@ehu.eus}

\author[G.A.\ Fern\'andez-Alcober]{Gustavo A.\ Fern\'andez-Alcober}
\address{Department of Mathematics\\ University of the Basque Country UPV/EHU\\
48080 Bilbao, Spain}
\email{gustavo.fernandez@ehu.eus}

\thanks{Both authors are supported by the Spanish Government grant MTM2017-86802-P and by the Basque Government grant IT974-16.
The first author is also supported by a predoctoral grant of the University of the Basque Country, and the second author, by the Spanish Government grant MTM2014-53810-C2-2-P}

\begin{abstract}
Let  $G$ be a finite $p$-group whose derived subgroup $G'$ can be generated by $2$ elements.
If $G'$ is abelian, Guralnick proved that every element of $G'$ is a commutator.
In this paper, we prove that the condition that $G'$ should be abelian is not needed.
Even more, we prove that every element of $G'$ is a commutator of the form $[x,g]$ for a fixed $x\in G$.
\end{abstract}

\maketitle

\section{Introduction}

Soon after the introduction of commutators in group theory on the eve of the 20th century, it was observed that the set $K(G)$ of commutators of a group $G$ need not be a subgroup.
In other words, the derived subgroup $G'=\langle K(G) \rangle$ may be strictly larger than $K(G)$.

However, several families of groups are known for which the equality $G'=K(G)$ holds.
A remarkable result in this direction is the proof by Liebeck, O'Brien, Shalev, and Tiep \cite{LOST} in 2010 of the so-called Ore Conjecture, according to which every finite simple group $G$ satisfies the condition $G'=K(G)$.
This is also true, at the other end of the spectrum, for nilpotent groups with cyclic derived subgroup \cite{rod}, a result that fails to hold if we drop the nilpotency assumption \cite{mac}.

The study of this property for finite nilpotent groups is obviously reduced to finite $p$-groups, where $p$ is a prime.
In this case, Guralnick \cite[Theorem 3.1]{gur} showed that $G'=K(G)$ whenever $G'$ is abelian and can be generated by $2$ elements.
Guralnick himself \cite[Theorem B]{gur} extended this result to 3-generator abelian derived subgroups provided that $p>3$, and showed that it fails to hold for $p=2$ or $3$ (see Examples 3.5 and 3.6 in the same paper).
On the other hand, as shown in Macdonald's book \cite{mac2} (Exercise 5, page 78), for every prime $p$ there exists a finite $p$-group with a $4$-generator abelian derived subgroup and such that
$G'\ne K(G)$.
A wealth of information about the condition $G'=K(G)$ can be found in the papers \cite{KM1} and \cite{KM2}.

Our goal in this paper is to generalise the above-mentioned first result of Guralnick by showing that the requirement that $G'$ should be abelian is not necessary.
Actually, we obtain the stronger property that all elements of $G'$ arise as commutators from a single suitable element of $G$.
This result is stated without proof in \cite{gur} in the case that $G'$ is abelian.
Thus the main theorem of this paper is the following.

\begin{thmA}
Let $G$ be a finite $p$-group.
If $G'$ can be generated by $2$ elements, then $G'=\{[x,g] \mid g\in G\}$ for a suitable $x\in G$.
\end{thmA}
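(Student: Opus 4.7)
The plan is to proceed by induction on $|G|$. The base case is when $G'$ is abelian: here Guralnick's theorem \cite{gur} gives $G'=K(G)$, but we need the \emph{stronger} single-$x$ form, which is stated without proof in \cite{gur}. To establish this we use that for any $x\in G$ the map $\bar\phi_x:G/G'\to G'$, $gG'\mapsto [x,g]$, is a well-defined homomorphism (since $G'$ is abelian), and it suffices to exhibit one $x$ for which $\bar\phi_x$ is surjective. Writing $G' = \langle a\rangle \times \langle b\rangle$ as a 2-generated abelian $p$-group, and taking $u,v \in G$ with $[u,\cdot]$, $[v,\cdot]$ covering $a$ and $b$ respectively modulo $\Phi(G')$, a direct argument with a single element $x$ built out of $u,v$ (using that surjectivity can be tested mod $\Phi(G')$ by Nakayama) produces the required $x$.

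For the inductive step, assume $G''\neq 1$ and choose $N\trianglelefteq G$ minimal with $N\leq G''$. Then $|N|=p$ and $N\leq Z(G)$. Since $(G/N)'=G'/N$ is still 2-generated, the inductive hypothesis supplies $y\in G$ with $G'/N=\{[y,g]N\mid g\in G\}$, equivalently $G'=[y,G]\cdot N$ as sets. Given $c\in G'$, write $c=[y,g_0]\,n$ with $g_0\in G$ and $n\in N$; the goal is to find $h\in G$ with $[y,g_0h]=c$. Using the identity
\[
[y,g_0h]=[y,h]\,[y,g_0]^h=[y,h]\,[y,g_0]\,[[y,g_0],h]
\]
together with the centrality of $N$, this reduces to solving $[y,h]\cdot[[y,g_0],h]=n$ for some $h$ in the preimage $H=\pi^{-1}(C_{G/N}(yN))$. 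On $H$, centrality of $N$ makes $h\mapsto [y,h]$ a homomorphism $H\to N$, and the factor $[[y,g_0],h]$ is an element of $N$ as well. Thus the lifting reduces to showing that an $\F_p$-affine map $H\to N$ is surjective.

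The main obstacle, and the crux of the argument, is that the $y$ handed down by induction need not make this map surjective; if it is the zero map, every $c$ in the given $N$-coset of $[y,g_0]$ outside $[y,g_0]$ itself is missed. The remedy is to allow ourselves to replace $y$ by $yz$ for a suitable $z\in G'$ (which does not change $yG'$ and therefore preserves the data used in the quotient $G/N$, since $[yz,g]\equiv [y,g][z,g]\pmod{G''}$ and $z\in G'$ commutes with $y$ modulo $[G',G']$). One must then show that among the admissible modifications, some choice forces the auxiliary map on $H$ to be surjective onto $N$; this is where the 2-generation hypothesis on $G'$ enters decisively, by restricting the Frattini quotient of $G'$ and hence the possible obstructions in $\F_p$-rank. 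The rest of the argument is a careful bookkeeping of commutator identities modulo $N$.
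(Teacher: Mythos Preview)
Your plan has substantive gaps at both stages, and the paper's route is entirely different.

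\emph{Base case.} The map $\bar\phi_x\colon gG'\mapsto[x,g]$ is \emph{not} a well-defined homomorphism $G/G'\to G'$ merely because $G'$ is abelian: well-definedness already forces $[x,G']=1$, and the homomorphism identity $[x,g_1g_2]=[x,g_1][x,g_2]$ needs $G'\le Z(G)$, i.e.\ nilpotency class at most~$2$. A Nakayama argument applied to the normal subgroup $[x,G]$ yields only $[x,G]=G'$, not $K_x(G)=G'$; the latter is strictly stronger since $K_x(G)$ need not be a subgroup. So the abelian case is not the formality your sketch suggests.

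\emph{Inductive step.} With the correct parametrisation $g=hg_0$ (the fibre of $g\mapsto[y,g]N$ over $[y,g_0]N$ is $Hg_0$, not $g_0H$), one gets $[y,hg_0]=[y,g_0]\,[y,h]$ for $h\in H$, and the extra factor $[[y,g_0],h]$ never appears; the lifting amounts precisely to surjectivity of the homomorphism $H\to N$, $h\mapsto[y,h]$. Your remedy for the non-surjective case does not work as stated: replacing $y$ by $yz$ with $z\in G'\smallsetminus N$ changes the image of $y$ in $G/N$, and the congruence $[yz,g]\equiv[y,g][z,g]\pmod{G''}$ only controls matters modulo $G''$, which in general \emph{properly} contains $N$; so you have not shown that $K_{(yz)N}(G/N)=G'/N$ persists. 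If instead $z\in N$, then $z$ is central and $K_{yz}(G)=K_y(G)$, so nothing is gained. The closing appeal to ``careful bookkeeping'' is exactly where the $2$-generator hypothesis must bite, and no mechanism is supplied.

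For contrast, the paper does not induct on $|G|$ through a minimal normal $N$. It first uses Blackburn's theorem to see that $G'$ is powerful (indeed $G''\le(G')^{p^2}$), and then proves a ``domino'' lemma: if $L\trianglelefteq G$ is powerful with $d(L)\le 2$ and two successive chief factors $L/N$, $N/L^p$ are each generated by a single commutator $[x,\,\cdot\,]$ satisfying a depth condition on the triple commutator, then the covering propagates down the whole chain $L\ge N\ge L^p\ge N^p\ge\cdots$, yielding $L\subseteq K_x(G)$. Producing such an $x$ reduces to showing that certain explicit obstruction subgroups $D(T)$ (and, for $p=2$, also subgroups $R(U)$) do not cover $G$; this is short for odd $p$ but requires a delicate case analysis when $p=2$.
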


Our proof of Theorem A does not need Guralnick's result for $p$-groups with abelian derived subgroup, and not even the result about $p$-groups with cyclic derived subgroup, which is actually an easy consequence of our methods.
Our approach relies on observing that $G'$ is necessarily powerful if $G$ is a finite $p$-group and
$d(G')\le 2$.
Recall that a finite $p$-group $P$ is said to be powerful if $P'\le P^{\hspace{0.4pt}p}$ if $p$ is odd, or if $P'\le P^4$ if $p=2$.

\vspace{8pt}

As an immediate consequence of Theorem A, we get the result below about pro-$p$ groups with $2$-generator derived subgroup.
Observe that, if $G$ is a pro-$p$ group and $\overline{G'}$ is topologically finitely generated, then $G'$ is closed in $G$ and consequently we can drop the closure operation from $\overline{G'}$.
This follows by passing to a suitable finitely generated closed subgroup of $G$ and using Proposition 1.19 of \cite{DDMS}.

\begin{thmB}
Let $G$ be a pro-$p$ group.
If $G'$ can be topologically generated by $2$ elements, then $G'=\{[x,g] \mid g\in G\}$ for a suitable
$x\in G$.
\end{thmB}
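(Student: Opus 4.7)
The plan is to derive Theorem B from Theorem A by a standard profinite compactness argument. As noted just before the statement, the hypothesis on $G$ ensures that $G'$ is already closed in $G$, so the stated equality makes sense without writing a topological closure. For each open normal subgroup $N$ of $G$, the quotient $G/N$ is a finite $p$-group whose derived subgroup $(G/N)'=G'N/N$ is a quotient of $G'$, and is therefore generated by at most $2$ elements; Theorem A then yields an element of $G$ whose image in $G/N$ has all of $(G/N)'$ as its set of commutators with $G/N$.

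I would first produce a single $x\in G$ that simultaneously works modulo every open normal subgroup. Let
\[
X_N = \{x\in G \mid G'N/N = \{[xN,gN] : g\in G\}\}.
\]
Since the defining condition depends only on $xN$, each $X_N$ is a union of cosets of the open subgroup $N$, hence clopen in $G$, and by the previous paragraph nonempty. From $X_{N_1}\cap X_{N_2}\supseteq X_{N_1\cap N_2}$ the family $\{X_N\}$ has the finite intersection property, so compactness of the profinite group $G$ gives $\bigcap_N X_N\ne\emptyset$; fix $x$ in this intersection. Next, for each $y\in G'$ and each open normal $N$, set
\[
C_N(y) = \{g\in G \mid [x,g]y^{-1}\in N\}.
\]
This is the preimage of the closed set $N$ under the continuous map $g\mapsto[x,g]y^{-1}$, hence closed in $G$, and nonempty because $x\in X_N$. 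These sets are again downward directed, so a second application of compactness produces $g\in\bigcap_N C_N(y)$, and then $[x,g]y^{-1}\in\bigcap_N N=1$, i.e.\ $[x,g]=y$.

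The argument contains no real obstacle beyond Theorem A itself: it is the routine compactness and inverse-limit packaging that transfers a uniform statement about all finite $p$-quotients into a statement about $G$. The one delicate preliminary point is knowing that $G'$ equals $\overline{G'}$ under the hypothesis, which is exactly the content of the remark preceding the statement of Theorem B.
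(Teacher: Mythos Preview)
Your proof is correct and follows the same compactness strategy as the paper: apply Theorem~A to each finite quotient, define the sets $X_N$, and use the finite intersection property to extract a common $x$. The only cosmetic difference is in the concluding step: the paper observes that $K_x(G)$, being the continuous image of the compact space $G$, is closed, and hence $K_x(G)=\overline{K_x(G)}=\bigcap_N K_x(G)N=\bigcap_N G'N=\overline{G'}=G'$ in one stroke, whereas you run a second compactness argument for each fixed $y\in G'$; both finishes are standard and equally valid.
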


\vspace{8pt}

\textit{Notation.\/}
Let $G$ be a group, and let $H\le G$ and $N\trianglelefteq G$.
We write $H\max G$ to denote that $H$ is maximal in $G$, and $H\maxG N$ to denote that $H$ is maximal among the proper subgroups of $N$ that are normal in $G$.
If $x\in G$ then we set $K_x(H)=\{ [x,h]\mid h\in H \}$ and $[x,H]=\langle K_x(H) \rangle$.
If $G$ is finitely generated, $d(G)$ stands for the minimum number of generators of $G$.
Finally, if $G$ is a topological group, we write $N\trianglelefteq_{\mathrm{o}} G$ to denote that $N$ is an open normal subgroup of $G$.

\section{Proof of the main results}

We start by recalling the following result of Blackburn \cite[Theorem 1]{black}, which will allow us to reduce the proof of Theorem A to the case when $G'$ is powerful.

\begin{lemma}
\label{blackburn}
Let $G$ be a finite $p$-group such that $d(G')\le 2$.
Then either $G'$ is abelian or it can be generated by two elements $a$ and $b$ with defining relations $a^{p^m}=b^{p^{n+k}}=1$ and $[a,b]=b^{p^n}$, with $k>0$ and $n\ge m\ge 2k$.
\end{lemma}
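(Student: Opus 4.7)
The plan is to follow Blackburn's strategy in three stages: dispose of the abelian case, show that $H := G'$ is metacyclic, and then extract the explicit presentation together with the numerical constraints $k>0$ and $n \ge m \ge 2k$.

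In the non-abelian case, I would pick generators $a, b$ of $H$ with $c := [a,b] \ne 1$. Since $d(H) \le 2$, the abelianization $H/H'$ is a direct product of two cyclic groups, so the central goal is to show that $H'$ itself is cyclic; combined with the cyclic-by-cyclic structure of $H/H'$ this will force $H$ to be metacyclic. To prove cyclicity of $H'$, I would work inside the metabelian quotient $G/H''$, whose derived subgroup $H/H''$ is abelian and $2$-generated. The normality of $H = G'$ in $G$, together with the resulting action of $G/H$ on $H/H''$ through its abelian quotient, produces enough relations among conjugates of $c$ to force $H'/H''$ to be cyclic. A descending induction on the derived length of $H$ (applying the same principle recursively to $H''$) then collapses $H'$ to $\langle c \rangle$.

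This cyclicity step is the main obstacle and the technical heart of the argument. Without the ambient normality of $H = G'$ in $G$, one cannot conclude it, since free $2$-generator nilpotent groups of sufficiently large class already have non-cyclic derived subgroups; the leverage comes entirely from exploiting that the outer $G$-action on $H$ factors through the abelian group $G/H$.

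Finally, once $H$ is metacyclic, I would choose generators so that $\langle b \rangle$ is the cyclic normal subgroup (of order $p^{n+k}$) and $a$ projects to a generator of the cyclic quotient of order $p^m$. The presentation $a^{p^m} = b^{p^{n+k}} = 1$, $[a,b] = b^{p^n}$ is then the standard metacyclic normal form, and $k > 0$ records non-abelianness. The inequalities $n \ge m \ge 2k$ emerge from routine bookkeeping: $n \ge m$ encodes that the automorphism of $\langle b \rangle$ induced by conjugation by $a$ has order dividing $p^m$, while $m \ge 2k$ arises from comparing $[a^{p^m}, b] = 1$ (forced by $a^{p^m} = 1$) with the explicit iterated action of $a$ on $b$ computed via the collection formula. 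This numerical part I expect to be straightforward compared to establishing cyclicity of $H'$.
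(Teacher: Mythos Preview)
The paper does not prove this lemma at all: it is quoted verbatim as Blackburn's result \cite[Theorem~1]{black} and used only as a black box to conclude that $G'' \le (G')^{p^2}$, hence that $G'$ is powerful. So there is no argument in the paper to compare your sketch against.

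That said, your outline does follow Blackburn's strategy in broad strokes, and the identification of ``$H'$ cyclic'' as the crux is correct. Two points deserve more care. First, the passage from ``$H'$ cyclic and $d(H)=2$'' to ``$H$ metacyclic'' is not the triviality your phrasing suggests: an abelian $2$-generator quotient $H/H'$ is of course cyclic-by-cyclic, but lifting this to a cyclic \emph{normal} subgroup of $H$ with cyclic quotient requires an argument. Second, and more substantively, the inequalities $n \ge m \ge 2k$ are \emph{not} internal consistency constraints on a metacyclic presentation. Internal consistency (that $a^{p^m}=1$ be compatible with the automorphism $a$ induces on $\langle b\rangle$) only yields $m \ge k$; there are perfectly good metacyclic $p$-groups with $m < 2k$. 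The sharper bound $m \ge 2k$ (and the arrangement $n \ge m$) comes precisely from the hypothesis that $H$ occurs as $G'$ for some finite $p$-group $G$, via the $G$-action on $H$---this is where Blackburn's proof does genuine work, and your description of it as ``routine bookkeeping'' via $[a^{p^m},b]=1$ misses the mechanism.
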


As a consequence, if the derived subgroup of a finite $p$-group $G$ can be generated by two elements, we have $G''\le (G')^{p^2}$ and $G'$ is powerful.

\vspace{10pt}

We continue by proving a couple of easy results about powerful finite $p$-groups.
Throughout the paper, we use freely the basic theory of powerful groups, as developed in
\cite[Chapter 2]{DDMS} or \cite[Chapter 11]{khu}.
Observe that the Frattini subgroup of a powerful $p$-group $G$ coincides with $G^p$, and consequently $|G:G^p|=p^{d(G)}$.

\begin{lemma}
\label{lemma powerful}
Let $G$ be a powerful finite $p$-group.
Then:
\begin{enumerate}
\item
If $H\le G$ is also powerful then $|G^{p^i}:H^{p^i}|\le |G:H|$ for all $i\ge 0$.
\vspace{2pt}
\item
If $d(G)=2$ then every subgroup of $G$ is also powerful.
\end{enumerate}
\end{lemma}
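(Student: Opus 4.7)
For Part (i), I would induct on $i$, with base case $i=0$ trivial and the inductive step reducing to $i=1$ (by applying the case $i=1$ to the powerful pair $G^{p^{i-1}} \ge H^{p^{i-1}}$). For $i=1$, I invoke the standard surjective homomorphism $\pi\colon G/G^p \to G^p/G^{p^2}$ sending $gG^p \mapsto g^pG^{p^2}$, available for every powerful $p$-group. Its restriction to the subgroup $HG^p/G^p$ has image exactly $H^pG^{p^2}/G^{p^2}$, so $|HG^p:G^p|\ge|H^pG^{p^2}:G^{p^2}|$. Combining this, via the second isomorphism theorem, with the decompositions $|G:H|=|G:HG^p|\cdot|G^p:H\cap G^p|$ and $|G^p:H^p|=|G^p:H\cap G^p|\cdot|H\cap G^p:H^p|$, the desired $|G^p:H^p|\le|G:H|$ reduces to $|H:H^p|\le|G:G^p|$, i.e., to $d(H)\le d(G)$. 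This last bound is the standard property that subgroups of powerful $p$-groups need no more generators than the ambient group.

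For Part (ii), I induct on $|G|$; the case $d(G)\le 1$ (cyclic) is immediate. Assume $d(G)=2$ and take $H\le G$ with $H<G$, so $H$ lies in some maximal subgroup $M$ of $G$. My plan is to show $M$ itself is powerful; then $d(M)\le d(G)=2$ and $|M|<|G|$, and the inductive hypothesis applied to $M$ yields that $H$ is powerful. To prove $M$ is powerful, pick $a\in G\setminus M$ and $b\in M$ so that $\{\bar a,\bar b\}$ is a basis of $G/G^p$ (possible since $M\supseteq G^p$ and $|G:M|=p$); by Burnside's basis theorem $G=\langle a,b\rangle$. Under $\pi$, the images $a^pG^{p^2}$ and $b^pG^{p^2}$ generate $G^p/G^{p^2}=G^p/(G^p)^p$, and another application of Burnside (now to the powerful group $G^p$) gives $G^p=\langle a^p,b^p\rangle$. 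Hence $M=\langle G^p,b\rangle=\langle a^p,b\rangle$, and in particular $d(M)\le 2$.

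It remains to verify $M'\le M^p$. Applying the same reasoning one level deeper to the powerful $2$-generated group $G^p$ (with generators $a^p$ and $b^p$) yields $G^{p^2}=\langle a^{p^2},b^{p^2}\rangle$. Since $a^{p^2}=(a^p)^p$ and $b^{p^2}=(b^p)^p$ are both $p$-th powers of elements of $M$, we conclude $G^{p^2}\le M^p$. Now the standard identity $[G^p,G]\le G^{p^2}$ for powerful $p$-groups (which follows from the identification $P_3(G)=G^{p^2}$ of the lower $p$-series with the $p$-power series) forces $[a^p,b]\in G^{p^2}\le M^p$. Therefore the two generators of $M$ commute modulo $M^p$, and since $M/M^p$ has exponent $p$ it must be abelian; this gives $M'\le M^p$, so $M$ is powerful. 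The main obstacle is identifying the chain $[a^p,b]\in G^{p^2}\le M^p$, which requires both the Burnside-type description $G^p=\langle a^p,b^p\rangle$ and the commutator inclusion $[G^p,G]\le G^{p^2}$; for $p=2$ the argument runs analogously with the usual shifts ($G^4$ replacing $G^p$ where required by the definition of powerful for $p=2$).
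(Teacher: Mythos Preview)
For part (i), your argument is essentially the paper's: both reduce the case $i=1$ to the inequality $d(H)\le d(G)$ (a standard fact for subgroups of powerful $p$-groups, cited in the paper as Theorem~11.18 of \cite{khu}) and then induct by passing to $p$-th powers. Your invocation of the $p$-power map $\pi$ is actually superfluous---the reduction of $|G^p:H^p|\le|G:H|$ to $|H:H^p|\le|G:G^p|$ is pure index arithmetic, and the inequality $|HG^p:G^p|\ge|H^pG^{p^2}:G^{p^2}|$ you extract from $\pi$ never gets used.

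For part (ii), the paper takes a shortcut you do not: it observes $|M:G^p|=p$, notes that $G^p$ is powerfully embedded in $G$, and quotes \cite[Lemma~11.7]{khu} to conclude that $M$ is powerful. Your hands-on route---writing $M=\langle a^p,b\rangle$, showing $G^{p^2}=\langle a^{p^2},b^{p^2}\rangle\le M^p$, and using $[a^p,b]\in[G^p,G]\le G^{p^2}\le M^p$---is correct for odd $p$ and has the merit of being self-contained rather than a citation.

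Your ``usual shifts'' for $p=2$, however, need more care than you indicate. The identification $P_3(G)=G^{p^2}$ gives only $[G^2,G]\le G^4$ when $p=2$, and since $a^4=(a^2)^2$ lies in $M^2$ but not obviously in $M^4$, your argument as stated yields only $M'\le M^2$, not the required $M'\le M^4$. The fix is to push one level deeper: from $G'\le G^4$ together with $[G^4,G]\le G^8$ (this is $P_4(G)=G^8$) one obtains $[G^2,G]\le (G')^2\gamma_3(G)\le G^8$, and then $G^8=\langle a^8,b^8\rangle=\langle (a^2)^4,(b^2)^4\rangle\le M^4$. This gives $[a^2,b]\in M^4$ and hence $M'\le M^4$, completing the argument.
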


\begin{proof}
(i)
Since $G$ is powerful, we have $d(H)\le d(G)$ by Theorem 11.18 of \cite{khu}.
Since $H$ is also powerful, this amounts to $|H:H^p|\le |G:G^p|$, which yields the result for $i=1$.
Now $G^p$ and $H^p$ are again powerful, and we have $G^{p^i}=(G^p)^{p^{i-1}}$ and similarly for $H$, so the general case follows immediately by induction on $i$.

(ii)
By induction on the group order, it suffices to show that every maximal subgroup $M$ of $G$ is powerful.
Now since $G$ is powerful and $d(G)=2$, we have $|M:G^p|=p$.
Since $G^p$ is powerfully embedded in $G$, it follows from \cite[Lemma 11.7]{khu} that $M$ is powerful.
\end{proof}

Next we see how to extend the covering of a subgroup with commutators of a fixed element $x$ from
a factor group to the group.

\begin{lemma}
\label{lemma index 2}
Let $G$ be a group and let $N\le L\le G$, with $N$ normal in $G$.
Suppose that for some $x\in G$ the following two conditions hold:
\begin{enumerate}
\item
$L/N\subseteq K_{xN}(G/N)$.
\item
$N\subseteq K_x(G)$.
\end{enumerate}
Then $L\subseteq K_x(G)$.
\end{lemma}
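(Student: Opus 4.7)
The plan is to take an arbitrary $l \in L$ and construct some $h \in G$ with $[x, h] = l$, by stitching together the two approximations the hypotheses provide: (i) furnishes a candidate modulo $N$, and (ii) is used to absorb the resulting error from $N$.

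By (i), there exists $g \in G$ such that $lN = [x, g]\, N$, so $l = [x, g]\, n$ for some $n \in N$. I search for $h$ of the form $h = yg$, with $y \in G$ still to be chosen. The commutator identity
$$[x, yg] = [x, g] \cdot [x, y]^{g}$$
turns the desired equation $[x, h] = l$ into $[x, y]^g = n$, equivalently $[x, y] = n^{g^{-1}}$. Since $N \trianglelefteq G$, the element $n^{g^{-1}}$ still lies in $N$, and hypothesis (ii) supplies a $y \in G$ satisfying this. Setting $h = yg$ then yields $[x, h] = l$, as required.

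There is no real obstacle here; the only detail worth flagging is the order in the factorisation $h = yg$. Choosing $yg$ rather than $gy$ makes the correction needed for $[x, y]$ appear as a conjugate of $n$, which by normality of $N$ in $G$ is still in $N$ and hence still realisable as a commutator with $x$ via (ii). Normality in the full group $G$ (and not merely in $L$) is thus exactly what makes the argument close.
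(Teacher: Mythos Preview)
Your proof is correct and essentially identical to the paper's: both use the identity $[x,g][x,y]^g=[x,yg]$ together with the normality of $N$ to absorb the error term. The only cosmetic difference is that the paper phrases it set-theoretically, showing the whole coset $lN=[x,g]N=[x,g]N^g\subseteq [x,g]K_x(G)^g$ lies in $K_x(G)$, whereas you chase a single element.
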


\begin{proof}
We prove that every coset $yN$ with $y\in L$ lies in $K_x(G)$.
By using (i) and (ii), for some $g\in G$ we have
\[
yN = [x,g]N = [x,g]N^g \subseteq [x,g] K_x(G)^g.
\]
Now since every element of $[x,g] K_x(G)^g$ is of the form $[x,g][x,h]^g=[x,hg]$, the result follows.
\end{proof}

The previous lemma will be used in combination with the following result, whose proof is straightforward.

\begin{lemma}
\label{lemma central}
Let $G$ be a group and let $N\le L\le G$, with $N$ normal in $G$.
If $L/N=\langle [x,s]N \mid s\in S \rangle$ for some $x\in G$ and some $S\subseteq G$ with $[L,S]\subseteq N$,
then $L/N\subseteq K_{xN}(\langle S \rangle N/N)\subseteq K_{xN}(G/N)$.
\end{lemma}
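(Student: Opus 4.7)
The plan is to pass to the quotient $\bar G = G/N$, writing $\bar L$, $\bar S$, $\bar x$ for the images of $L$, $S$, $x$, and to translate the hypothesis $[L,S]\subseteq N$ as $[\bar L,\bar S]=1$. Hence $\bar S$ (and consequently $\langle\bar S\rangle$) centralises $\bar L$, and the conclusion reduces to showing that every element of $\bar L$ can be written as $[\bar x,\bar g]$ for some $\bar g\in\langle\bar S\rangle$.

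The main computation is the standard commutator identity $[a,bc]=[a,c]\,[a,b]^c$, applied iteratively. Writing an arbitrary $\bar g\in\langle\bar S\rangle$ as $\bar h\bar s$ with $\bar h$ of shorter word length in $\bar S\cup\bar S^{-1}$, I would obtain
\[
[\bar x,\bar h\bar s]=[\bar x,\bar s]\,[\bar x,\bar h]^{\bar s}=[\bar x,\bar s]\,[\bar x,\bar h],
\]
where the second equality uses the inductive hypothesis $[\bar x,\bar h]\in\bar L$ together with the fact that $\bar s$ centralises $\bar L$. For inverse generators $\bar s^{-1}$, the same centralisation gives $[\bar x,\bar s^{-1}]=[\bar x,\bar s]^{-1}\in\bar L$, so the base case is fine. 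This simultaneously establishes $[\bar x,\bar g]\in\bar L$ for every $\bar g\in\langle\bar S\rangle$ and shows that the map $\varphi\colon\langle\bar S\rangle\to\bar L$ defined by $\varphi(\bar g)=[\bar x,\bar g]$ is an anti-homomorphism.

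Since $\varphi$ is an anti-homomorphism, its image is a subgroup of $\bar L$; as it contains every generator $[\bar x,\bar s]$ of $\bar L$, it must equal $\bar L$. Unravelling the quotient, this is exactly the inclusion $L/N\subseteq K_{xN}(\langle S\rangle N/N)$, and $K_{xN}(\langle S\rangle N/N)\subseteq K_{xN}(G/N)$ is trivial. There is no genuine obstacle in this argument; the only point to watch is the direction of composition in $\varphi$, which comes out reversed. Abelianness of $\bar L$ would make $\varphi$ a bona fide homomorphism, but it is not needed — the only property used is that the image of an anti-homomorphism is still a subgroup.
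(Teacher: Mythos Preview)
Your proof is correct and is precisely the straightforward argument the paper has in mind; the paper in fact omits the proof entirely, remarking only that it ``is straightforward''. Your passage to $G/N$, the observation that $\langle\bar S\rangle$ centralises $\bar L$, and the resulting anti-homomorphism $\bar g\mapsto[\bar x,\bar g]$ whose image is a subgroup containing the generators of $\bar L$ --- this is exactly the intended one-line verification.
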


Next we give a lemma which is the key to our proof of Theorem A.
It shows that, under some specific conditions, covering a factor group $L/L^p$ with commutators of a given element $x$ is enough to cover $L$.
We recall that, if $L$ and $N$ are two normal subgroups of a group $G$ and $n\in\N$, then
 $[L^n,N]\le [L,N]^n [L,N,L]$.

\begin{lemma}
\label{lemma LN}
Let $G$ be a finite $p$-group and let $N\le L$ be normal subgroups of $G$, with $L$ powerful and
$d(L)\le 2$.
Then the following hold:
\begin{enumerate}
\item
If there exist $x,g\in G$ such that $L/N=\langle [x,g]N\rangle$ and $[x,g,g]\in N^p$, then
$L^{p^i}/N^{p^i}=\langle [x,g^{p^i}]N^{p^i}\rangle$
for every $i\ge 1$. 
\item
Assume furthermore that $L^p\le N$ and $|L:N|=p$.
If there exist $x,g,h\in G$ such that $L/N=\langle [x,g]N \rangle$ and $N/L^p=\langle [x,h]L^p \rangle$ with
$[x,g,g]\in N^p$ and $[x,h,h]\in L^{p^2}$, then $L\subseteq K_x(G)$.
\end{enumerate}
\end{lemma}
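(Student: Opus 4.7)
The plan is to prove (i) by direct commutator computation, then use it as the engine for an inductive proof of (ii) via Lemmas~\ref{lemma index 2} and~\ref{lemma central}.

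For part~(i), induct on $i\ge 1$; the case $i=1$ is the crux. A key preliminary is $[N^p,g]\subseteq N^p$: by Lemma~\ref{lemma powerful}(ii), $N$ is powerful, so $N'\le N^p$; moreover $[N,g]\le N$ by normality, so $[N,g]^p\le N^p$. The identity $[L^n,M]\le[L,M]^n[L,M,L]$ applied to $L=N$, $M=\langle g\rangle^G$, $n=p$ then gives $[N^p,g]\le[N,g]^p[N,g,N]\le N^p$. Iterating with $[x,g,g]\in N^p$ places every iterated commutator $[x,g,\ldots,g]$ (with at least two copies of $g$) in $N^p$. Expanding $[x,g^p]$ by repeated use of $[x,yz]=[x,z][x,y]^z$ then yields $[x,g^p]\equiv[x,g]^p\pmod{N^p}$. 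Combined with $L=\langle[x,g]\rangle N$ and an analogous Hall--Petrescu expansion of $p$-th powers of elements of $L$, the powerfulness of $L$ and $N$ gives $L^p/N^p=\langle[x,g]^pN^p\rangle=\langle[x,g^p]N^p\rangle$. The inductive step applies the base case to $(L^{p^{i-1}},N^{p^{i-1}},g^{p^{i-1}})$ after checking the propagated hypothesis $[x,g^{p^{i-1}},g^{p^{i-1}}]\in N^{p^i}$ via the same calculus.

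For part~(ii), induct on $|L|$. Two applications of Lemma~\ref{lemma index 2} (first to the pair $L\rhd N$, then to $N\rhd L^p$) reduce the problem to showing $L^p\subseteq K_x(G)$, provided $L/N$ and $N/L^p$ can be covered by commutators of $x$. Lemma~\ref{lemma central} with singleton $S=\{g\}$ supplies the first: one needs $[L,g]\subseteq N$, which follows from $L=\langle[x,g]\rangle N$, $[x,g,g]\in N^p\subseteq N$, and normality of $N$ in $G$. Lemma~\ref{lemma central} with $S=\{h\}$ handles $N/L^p$ analogously using $[x,h,h]\in L^{p^2}\subseteq L^p$ and normality of $L^p$. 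To cover $L^p$, apply the induction hypothesis to the pair $(L^p,N^p)$ with elements $g^p,h^p$: Lemma~\ref{lemma powerful}(ii) gives that $L^p$ is powerful with $d(L^p)\le 2$; the generating conditions for $L^p/N^p$ and $N^p/L^{p^2}$ come from part~(i) applied to $(L,N,g)$ and to $(N,L^p,h)$; and the propagated commutator conditions $[x,g^p,g^p]\in N^{p^2}$, $[x,h^p,h^p]\in L^{p^3}$ come from the calculus of part~(i). If the recursion is degenerate in the sense that $|L^p:N^p|=1$, then Lemma~\ref{lemma powerful}(i) forces $d(N)<d(L)$ and $N$ becomes cyclic, and one instead applies the induction hypothesis to $(N,L^p)$ to conclude $N\subseteq K_x(G)$, whence Lemma~\ref{lemma index 2} gives $L\subseteq K_x(G)$.

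The main obstacle is the commutator calculus running throughout both parts: proving $[x,g^p]\equiv[x,g]^p\pmod{N^p}$ in part~(i), and propagating the commutator hypotheses from level to level in part~(ii). Both arguments rest on a careful interplay between the identity $[L^n,M]\le[L,M]^n[L,M,L]$, the normality of the relevant subgroups in $G$, and the defining properties of powerful $p$-groups ($L'\le L^p$ for odd $p$), combined with Hall--Petrescu-style expansions that work modulo the appropriate subgroups.
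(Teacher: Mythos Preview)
Your proposal is correct and follows essentially the same route as the paper: part~(i) via the congruence $[x,g^p]\equiv[x,g]^p\pmod{N^p}$ together with the powerful-group fact $L^p=\langle[x,g]^p,N^p\rangle$, and part~(ii) by climbing the series $L\ge N\ge L^p\ge N^p\ge\cdots$ using Lemmas~\ref{lemma central} and~\ref{lemma index 2}. The one difference worth noting is packaging: the paper observes once and for all that every factor of this series has index at most $p$ (by Lemma~\ref{lemma powerful}(i)) and is therefore central in $G$, which makes Lemma~\ref{lemma central} apply immediately at every step and removes the need for your direct verifications of $[L,g]\subseteq N$ and $[N,h]\subseteq L^p$, for the separate treatment of the degenerate case $L^p=N^p$, and for the explicit induction on $|L|$. (Incidentally, your opening verification that $[N^p,g]\subseteq N^p$ is unnecessary: $N\trianglelefteq G$ already gives $N^p\trianglelefteq G$.)
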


\begin{proof}
(i)
We argue by induction on $i$.
Assume first $i=1$.
Since $L$ is powerful and $L=\langle [x,g], N \rangle$, it follows that $L^p=\langle [x,g]^p, N^p \rangle$, and thus $L^p/N^p=\langle [x,g]^pN^p\rangle$.
Now, from the hypothesis $[x,g,g]\in N^p$ we get
\begin{equation}
\label{eqn comm [x,gp]}
[x,g^p]\equiv [x,g]^p \pmod{N^p},
\end{equation}
and consequently $L^p/N^p=\langle [x,g^p]N^p\rangle$.

Now let $i>1$.
By (ii) of Lemma \ref{lemma powerful}, $N$ is also powerful.
If we prove that $[x,g^p,g^p]\in N^{p^2}$, then we can apply the induction hypothesis with $L^p$ and $N^p$ playing the role of $L$ and $N$, and $g^p$ playing the role of $g$, and we are done.
Observe that $|N^p:N^{p^2}|\le p^2$, since $N^p$ is powerful and $d(N^p)\le d(L)\le 2$.
Since $N$ is normal in $G$, it follows that $[N^p,G']\le [N^p,G,G]\le N^{p^2}$.
As a consequence, $[N^p,G^p]\le [N^p,G]^p[N^p,G,G]\le N^{p^2}$ and then (\ref{eqn comm [x,gp]}) yields that
\[
[x,g^p,g^p] \equiv [[x,g]^p,g^p] \pmod{N^{p^2}}.
\]
On the other hand, since $[x,g,g]\in N^p$ implies that $[x,g,g^p]\in N^p$ as well, it follows that 
$[[x,g],g^p,[x,g]]\in [N^p,G']\le N^{p^2}$, and we obtain as desired that
\[
[x,g^p,g^p] \equiv [[x,g]^p,g^p] \equiv [x,g,g^p]^p \equiv 1 \pmod{N^{p^2}}.
\]

(ii)
Consider the following normal series of $G$:
\begin{equation}
\label{series}
L\ge N\ge L^p\ge N^p\ge L^{p^2}\ge N^{p^2}\ge \cdots \ge 1.
\end{equation}
By hypothesis, we have $|L:N|=p$.
Also, since $L$ is powerful and $d(L)\le 2$, we have $|L:L^p|\le p^2$, and therefore $|N:L^p|\le p$.
As a consequence, if $R$ and $S$ are two consecutive terms of (\ref{series}) then $|R:S|\le p$, by using (i) of Lemma \ref{lemma powerful}.
Hence the section $R/S$ is central in $G$.
On the other hand, by (i), $R/S=\langle [x,y]S \rangle$ for some $y\in G$.
Then $R/S\subseteq K_{xS}(G/S)$, by Lemma \ref{lemma central}, and by going up the series (\ref{series}) and using Lemma \ref{lemma index 2}, we conclude that $L\subseteq K_x(G)$.
\end{proof}

As an easy illustration of our method based on Lemma \ref{lemma LN}, let us prove the well-known result that if $G$ is a finite $p$-group and $G'$ is cyclic then $G'$ consists of commutators.
Let $C^*=C_G(G'/(G')^{p^2})$.
Then $|G:C^*|\le p$ and $G'=[G,C^*]$.
By the Burnside Basis Theorem, we have $G'=\langle [x,y] \rangle$ for some $x\in G$ and $y\in C^*$.
Then $[x,y,y]\in [G,C^*,C^*]\le [G',C^*]\le (G')^{p^2}$ and we can apply Lemma \ref{lemma LN} with $L=G'$ and $N=(G')^p$ to get $G'=K_x(G)$.

\vspace{8pt}

However, if $d(G')=2$ the situation is more complicated and actually it is not always possible to apply Lemma \ref{lemma LN} with $L=G'$, being necessary to start the `domino effect' shown in the last proof at a lower level.
Also, we need to cover two chief factors, instead of one, with commutators of a single element $x$ before taking $p$th powers, and this is trickier in some cases when dealing with the prime $2$.
This motivates introducing the subgroups $D(T)$ and $R(U)$ of Definitions \ref{definition D} and \ref{definition R} below which, as shown in Lemmas \ref{lemma D} and \ref{lemma R}, satisfy that any element $x$ avoiding them will have the desired covering property.

\vspace{8pt}

The following subgroup plays a fundamental role in Guralnick's proof in the case that $G'$ is abelian, and it is also relevant in our proof.

\begin{definition}
Let $G$ be a finite $p$-group with $G'$ powerful.
We define $C=C_G(G'/(G')^p)$.
\end{definition}

If $p$ is odd then, by definition, $G'$ is powerfully embedded in $C$, and so all power subgroups $(G')^{p^i}$ are also powerfully embedded in $C$.
In other words, we have $[(G')^{p^i},C]\le (G')^{p^{i+1}}$.
As it turns out, this is true for all primes, and similar inclusions hold for other commutator subgroups involving $C$.
More precisely, we have the following lemma, that we state in a bit more generality than we actually need.

\begin{lemma}
\label{lemma C}
Let $G$ be a finite $p$-group with $G'$ powerful.
Then:
\begin{enumerate}
\item
$[(G')^{p^i},C^{p^j}]\le (G')^{p^{i+j+1}}$ for every $i,j\ge 0$.
\item
$[G,C^{p^j}]\le (G')^{p^j}$ for every $j\ge 0$.
\item
If $d(G')\le 2$ then $|G:C|\le p$.
\end{enumerate}
\end{lemma}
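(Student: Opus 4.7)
The plan is to prove (i) first, then derive (ii) as a short corollary, and treat (iii) by a separate argument. Parts (i) and (ii) both rest on the commutator identity $[L^n,N]\le [L,N]^n[L,N,L]$ recalled just above the lemma (and its counterpart in the second slot, obtained by swapping the two arguments), the definition $[G',C]\le (G')^p$, and the standard fact that in a powerful finite $p$-group each power subgroup $(G')^{p^i}$ is powerfully embedded in $G'$, so in particular $[(G')^{p^i},G']\le (G')^{p^{i+1}}$. Note that $C$ is characteristic in $G$ (since $G'$ and $(G')^p$ are), so all the subgroups involved are normal and the identity applies.

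For (i), I would argue by induction on $j$, with a nested induction on $i$ in the base case $j=0$. The case $i=j=0$ is the definition of $C$. For $j=0$ and $i\ge 1$, expanding $(G')^{p^i}=((G')^{p^{i-1}})^p$ in the first slot yields
\[
[(G')^{p^i},C]\le [(G')^{p^{i-1}},C]^p\,[(G')^{p^{i-1}},C,(G')^{p^{i-1}}];
\]
the inner hypothesis bounds $[(G')^{p^{i-1}},C]$ by $(G')^{p^i}$, so the first factor lies in $(G')^{p^{i+1}}$, and the second is contained in $[(G')^{p^i},(G')^{p^{i-1}}]\le (G')^{p^{i+1}}$ by powerful embedding. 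For the step from $j-1$ to $j$, I would expand $C^{p^j}=(C^{p^{j-1}})^p$ in the second slot and apply the induction hypothesis twice: the $p$-th power factor lands in $(G')^{p^{i+j+1}}$, while the double-commutator factor satisfies $[(G')^{p^i},C^{p^{j-1}},C^{p^{j-1}}]\le [(G')^{p^{i+j}},C^{p^{j-1}}]\le (G')^{p^{i+2j}}\le (G')^{p^{i+j+1}}$, the last containment using $j\ge 1$.

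Part (ii) is then a quick induction on $j$: the case $j=0$ is just $[G,C]\le G'$, and the step follows from expanding $C^{p^j}=(C^{p^{j-1}})^p$ to obtain
\[
[G,C^{p^j}]\le [G,C^{p^{j-1}}]^p\,[G,C^{p^{j-1}},C^{p^{j-1}}],
\]
whose first factor is handled by the induction hypothesis and whose second by part (i) applied with $i=j-1$ (using $(G')^{p^{2j-1}}\le (G')^{p^j}$). For (iii), the argument is entirely different: since $G'$ is powerful with $d(G')\le 2$, the quotient $G'/(G')^p$ is elementary abelian of rank at most $2$. Since $C$ is by definition the kernel of the conjugation action of $G$ on $G'/(G')^p$, the quotient $G/C$ embeds into $\Aut(G'/(G')^p)\le GL_2(\F_p)$, whose Sylow $p$-subgroup has order $p$; as $G/C$ is a $p$-group, $|G:C|\le p$ follows.

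The main technical obstacle is the bookkeeping in part (i): the outer induction must be on $j$ (not on $i$ or on $i+j$), since expanding $C^{p^j}$ produces a double commutator involving two copies of $C^{p^{j-1}}$, and closing the argument requires invoking the full statement for $j-1$ at the shifted index $i+j$, which would be inaccessible if one inducted on $i+j$.
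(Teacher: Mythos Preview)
Your proof is correct and follows essentially the same approach as the paper: the same nested induction for (i), the same reduction of (ii) to (i), and the same observation for (iii) that $G/C$ embeds as a $p$-subgroup of $\Aut(G'/(G')^p)$. One small remark: in the inductive step for (i) the paper bounds the double-commutator factor simply by $[(G')^{p^{i+j}},C]$ (using $C^{p^{j-1}}\le C$) and invokes only the already-established $j=0$ case, so your claim that the full hypothesis at the shifted index is ``inaccessible'' under an induction on $i+j$ is a slight overstatement---but your version works just as well.
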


\begin{proof}
(i)
We use induction on $j$.
Assume first that $j=0$ and use induction on $i$.
The base of the induction is given by the definition of $C$, and if $i>0$ then
\begin{align*}
[(G')^{p^i},C]
&\le
[(G')^{p^{i-1}},C]^p \ [(G')^{p^{i-1}},C,(G')^{p^{i-1}}]
\\
&\le
(G')^{p^{i+1}} [(G')^{p^i},G']
\le (G')^{p^{i+1}},
\end{align*}
by using the induction hypothesis and the fact that $G'$ is powerful.
Now if $j>0$ then
\begin{align*}
[(G')^{p^i},C^{p^j}]
&\le [(G')^{p^i},C^{p^{j-1}}]^p \ [(G')^{p^i},C^{p^{j-1}},C^{p^{j-1}}]
\\
&\le (G')^{p^{i+j+1}} [(G')^{p^{i+j}},C]
\le (G')^{p^{i+j+1}}.
\end{align*}

(ii)
Again we argue by induction on $j$, the case $j=0$ being obvious.
If $j>0$ then
\[
[G,C^{p^j}]
\le
[G,C^{p^{j-1}}]^p \ [G,C^{p^{j-1}},C^{p^{j-1}}]
\le
(G')^{p^j} \ [(G')^{p^{j-1}},C] \le (G')^{p^j},
\]
where the last inclusion follows from (i).

(iii)
This is obvious, since $C$ is the centraliser of the normal section $G'/(G')^p$, of order at most $p^2$.
\end{proof}

\begin{definition}
\label{definition D}
Let $G$ be a non-abelian finite $p$-group.
For every $T\maxG G'$ we define the subgroup $D(T)$ by the condition
\[
D(T)/T = Z(G/T),
\]
that is, $D(T)$ is the largest subgroup of $G$ satisfying $[D(T),G]\le T$.
We set $D=\cup \{ D(T) \mid T\maxG G' \}$.
\end{definition}

\begin{lemma}
\label{lemma D}
If $G$ is a non-abelian finite $p$-group then $[x,G]=G'$ if and only  if $x\not\in D$.
Furthermore, if $d(G')\le 2$ then:
\begin{enumerate}
\item
For every $T\maxG G'$, we have $\Phi(G)\le D(T)\le C$ and $\log_p |G:D(T)|$ is even.
\item
$D$ is a proper subset of $G$.
\end{enumerate}
\end{lemma}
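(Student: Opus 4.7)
The plan is to address the three claims in sequence. For the biconditional, the key observation is that $[x,G]$ is a normal subgroup of $G$: the identity $[x,g]^h = [x,h]^{-1}[x,gh]$ shows it is closed under conjugation. Since $[x,G] \le G'$ is $G$-invariant, if it is a proper subgroup of $G'$ it must be contained in some $G$-maximal proper subgroup $T$ of $G'$, giving $x \in D(T) \subseteq D$. The converse is immediate, because $x \in D(T)$ forces $[x,G] \le T \subsetneq G'$.

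Assume now $d(G') \le 2$. The inclusion $\Phi(G) \le D(T)$ amounts to $[\Phi(G),G] \le T$; since $G'/T$ is a chief factor of order $p$, it is central in $G/T$ and of exponent $p$, so $\gamma_3(G) \le T$ and $(G')^p \le T$, and the standard identity $[G^p,G] \le (G')^p \gamma_3(G)$ closes the argument. The evenness of $\log_p|G:D(T)|$ is the classical symplectic statement: $G/T$ has class at most $2$ with derived subgroup of order $p$, so $(G/T)/Z(G/T) = G/D(T)$ is elementary abelian and the commutator pairing induces on it a nondegenerate alternating bilinear form into $G'/T \cong \F_p$, forcing its $\F_p$-dimension to be even.

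The substantive step is $D(T) \le C$. My approach is to show that, for each $y \in D(T)$, the map $\phi \colon G \to V := G'/(G')^p$ defined by $\phi(g) = [y,g](G')^p$ is a group homomorphism; then $\phi$ factors through the abelianisation and hence kills $G'$, which is exactly the condition $y \in C$. The identity $[y,g_1 g_2] = [y,g_2][y,g_1]^{g_2}$ reduces the homomorphism property to $[[y,G],G] \le (G')^p$, and since $[y,G] \le T$ this reduces further to $[T,G] \le (G')^p$. Establishing this final inclusion is the genuine obstacle: $T/(G')^p$ is a $G$-invariant subspace of $V$ of dimension at most one, and the action of $G$ on $V$ factors through $G/C \le \mathrm{GL}(V) \le \mathrm{GL}_2(\F_p)$; since a Sylow $p$-subgroup of $\mathrm{GL}_2(\F_p)$ has order $p$, the elements of $G/C$ are unipotent matrices, and a unipotent matrix acts trivially on every $1$-dimensional invariant subspace.

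Finally, for (ii), each $D(T)$ is a proper subgroup of $G$ (otherwise $G/T$ would be abelian, contradicting $G' \nleq T$), and the parity just established gives $|G:D(T)| \ge p^2$. Since $d(G') \le 2$, the number of subgroups $T \maxG G'$ is bounded by the number of hyperplanes in $V$, which is at most $p+1$. Therefore
\[
|D| \le \sum_{T} |D(T)| \le (p+1)\,\frac{|G|}{p^2} < |G|,
\]
giving $D \subsetneq G$, as required.
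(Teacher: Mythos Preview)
Your proof is correct. For the biconditional and part~(i) you follow essentially the same route as the paper: both arguments reduce $D(T)\le C$ to the inclusion $[T,G]\le(G')^p$, which the paper dispatches in one line by observing that $T/(G')^p$, being a normal subgroup of order at most $p$ in the $p$-group $G/(G')^p$, is central; your unipotent-matrix argument reaches the same conclusion but is more work than needed. The paper then passes from $[T,G]\le(G')^p$ to $[D(T),G']\le(G')^p$ via the three-subgroup lemma, whereas you package the same deduction through the homomorphism $\phi$ --- these are equivalent.

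Part~(ii) is where you genuinely diverge. The paper argues by contradiction: if $D=G$ then every element of $\overline G=G/(G')^p$ has conjugacy class of size at most $p$, so $\overline G$ has breadth at most $1$, and an external result (Lemma~2.12 of Berkovich) forces $|\overline G'|\le p$, contradicting $d(G')=2$. Your counting argument --- at most $p+1$ subgroups $T\maxG G'$ (one for each hyperplane of $G'/(G')^p$), each $D(T)$ of index at least $p^2$ by the parity statement, so $|D|\le(p+1)|G|/p^2<|G|$ --- is shorter, entirely self-contained, and avoids any citation. It is a cleaner proof of this step.
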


\begin{proof}
Since $[x,G]$ is a normal subgroup of $G$, we have $[x,G]<G'$ if and only if $x\in D(T)$ for some $T\maxG G'$, and the first assertion follows.
Let us now assume that $d(G')\le 2$, and recall that $G'$ is powerful by Lemma \ref{blackburn}.
Thus $|G':(G')^p|\le p^2$.

(i)
Let $T\maxG G'$.
We have $[\Phi(G),G]=[G^p,G]\gamma_3(G)=(G')^p\gamma_3(G)\le T$, and so $\Phi(G)\le D(T)$.
Thus $G/D(T)$ can be seen as an $\F_p$-vector space.
Also $[D(T),G']\le [D(T),G,G]\le [T,G]\le (G')^p$, since $|T:(G')^p|\le p$, and consequently $D(T)\le C$.
On the other hand, the  commutator map in $G/T$ induces a a non-degenerate alternating form on $G/D(T)$, and thus
$\dim_{\F_p} G/D(T)$ is even.

(ii)
Assume for a contradiction that $D=G$.
If $|G':(G')^p|=p$ then $D=D((G')^p)$ and consequently $G'=[D,G]\le (G')^p$, a contradiction.
Thus $|G':(G')^p|=p^2$.
Let $x\in G$ be arbitrary.
Then $x\in D(T)$ for some $T\maxG G'$ and the image of $[x,G]$ in $\overline G=G/(G')^p$ has order at most $p$.
It follows that all conjugacy class lengths in $\overline G$ are either $1$ or $p$, i.e.\ that $\overline G$ is a $p$-group of breadth at most $1$.
By Lemma 2.12 of \cite{ber}, this implies that $|\overline G'|\le p$, which is again a contradiction.
\end{proof}

By the previous lemma, if $d(G')\le 2$ then there always exists $x\in G$ such that $G'=[x,G]$.
Since $|G:C|\le p$ by Lemma \ref{lemma C} and $D\subseteq C$ by Lemma \ref{lemma D}, we can  choose $x\not\in D$ such that $G=\langle x \rangle C$, and then we get $G'=[x,C]$.
We are now in a position to prove Theorem A for $p>2$.

\begin{theorem}
\label{theorem p odd}
Let $G$ be a finite $p$-group, where $p$ is an odd prime, and assume that $d(G')=2$.
Then $G'=K_x(G)$ for a suitable $x\in G$.
\end{theorem}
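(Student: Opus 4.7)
The plan is to prove $G'\subseteq K_x(G)$ in three cascading steps. First I would pick $x\in G\setminus D$ with $G=\langle x\rangle C$, so that $[x,C]=G'$ by Lemma~\ref{lemma D}. Since $p$ is odd, $G'$ is powerful, so $[G',G']\le (G')^p$ and in particular $G'\le C$; combined with $[G',C]\le (G')^p$, this makes
\[
\phi_1\colon C\longrightarrow G'/(G')^p,\qquad c\longmapsto [x,c](G')^p,
\]
into a group homomorphism, and its image is all of $G'/(G')^p$ because $[x,C]=G'$. I would then fix a $G$-invariant subgroup $N_1$ with $(G')^p\le N_1\le G'$ and $|G':N_1|=p$, and a $G$-invariant subgroup $N_0$ with $(G')^{p^2}\le N_0\le (G')^p$ and $|(G')^p:N_0|\le p$; both exist because $G/C$ is a $p$-group acting on an $\F_p$-space of dimension at most $2$, hence must preserve a codimension-one subspace.

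The first step is to obtain $(G')^p\subseteq K_x(G)$ from Lemma~\ref{lemma LN}(ii), applied with $L_0=(G')^p$ and $N_0$ as above, choosing the generators $g,h$ inside $C^p$. For such $g,h$, Lemma~\ref{lemma C} immediately gives $[x,g,g],\,[x,h,h]\in [(G')^p,C^p]\le (G')^{p^3}\subseteq N_0^p\cap L_0^{p^2}$, which takes care of the two commutator-depth hypotheses of the lemma. The two remaining coset hypotheses require $[x,g]$ and $[x,h]$ to generate the chief factors $L_0/N_0$ and $N_0/L_0^p$, respectively, and I would secure them by proving that
\[
\psi\colon C^p\longrightarrow (G')^p/(G')^{p^2},\qquad g\longmapsto [x,g](G')^{p^2},
\]
is a surjective homomorphism. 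With $(G')^p\subseteq K_x(G)$ in hand, the cascade through $N_1$ and then $G'$ is routine: each of the chief factors $N_1/(G')^p$ and $G'/N_1$ has order $p$ and is centralised by $G$ modulo the base (a $p$-group acts trivially on a cyclic group of order $p$), and by surjectivity of $\phi_1$ it contains a generator of the form $[x,s](G')^p$ or $[x,t]N_1$ with $s,t\in C$; Lemma~\ref{lemma central} then covers each factor, and two applications of Lemma~\ref{lemma index 2} chain the three inclusions together.

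The main obstacle, and the only place where oddness of $p$ is genuinely used, is the surjectivity of $\psi$. The heart of the matter is the congruence $[x,c^p]\equiv [x,c]^p\pmod{(G')^{p^2}}$ for $c\in C$, which I would prove by expanding $[x,c^p]=\prod_{j=0}^{p-1}[x,c]^{c^j}$, noting that each factor differs from $[x,c]$ by $[x,c,c^j]\equiv [x,c,c]^j\pmod{(G')^{p^2}}$ (a calculation using $[x,c]\in G'\le C$ together with Lemma~\ref{lemma C}), and collapsing the accumulated error to $[x,c,c]^{\binom{p}{2}}$. Oddness of $p$ gives $p\mid\binom{p}{2}$, and since $[x,c,c]\in (G')^p$ this forces the error into $(G')^{p^2}$. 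Once the congruence is in place, the image of $\psi$ contains $\{y^p(G')^{p^2}\mid y\in G'\}$, which equals all of $(G')^p/(G')^{p^2}$ because $G'$ is powerful. The degenerate cases in which $(G')^p$ is cyclic or trivial are handled by taking $N_0=L_0^p$ and $h=1$, or directly from $\phi_1$.
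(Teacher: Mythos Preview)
Your proposal is correct and follows essentially the same route as the paper: choose $x\notin D$ with $G=\langle x\rangle C$, cover $G'/(G')^p$ using $[G',C]\le(G')^p$ and Lemma~\ref{lemma central}, reduce to $(G')^p\subseteq K_x(G)$ via Lemma~\ref{lemma index 2}, establish the congruence $[x,c^p]\equiv[x,c]^p\pmod{(G')^{p^2}}$ for $c\in C$ (this is where oddness of $p$ enters), and then invoke Lemma~\ref{lemma LN}(ii) with $L=(G')^p$ using generators coming from $C^p$ and the bound $[x,C^p,C^p]\le(G')^{p^3}$ from Lemma~\ref{lemma C}.

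Two minor remarks. First, your detour through the intermediate $N_1$ is unnecessary: since $[G',C]\le(G')^p$, Lemma~\ref{lemma central} applies directly to the whole section $G'/(G')^p$ with $S=C$, giving $G'/(G')^p\subseteq K_{x(G')^p}(G/(G')^p)$ in one step, exactly as the paper does. Second, the paper obtains the key congruence via the Hall--Petresco identity applied to $H=\langle u,[x,u]\rangle$, whereas you expand $[x,c^p]=\prod_{j=0}^{p-1}[x,c]^{c^j}$ and collect the error as $[x,c,c]^{\binom{p}{2}}$; both arguments use $G''\le(G')^{p^2}$ (from Lemma~\ref{blackburn}) to justify working in the abelian quotient $G'/(G')^{p^2}$, and both need $p\mid\binom{p}{2}$. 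Your explicit insistence that $N_0$ be $G$-invariant is a point the paper leaves implicit when it ``chooses a subgroup $T$''.
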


\begin{proof}
We have $G'=[x,C]$ for some $x\in G$ and so, by Lemma \ref{lemma central},
$G'/(G')^p=\{[x,u](G')^p \mid u\in C\}$ .
By Lemma \ref{lemma index 2}, we only need to prove that $(G')^p\subseteq K_x(G)$.
Hence we may assume that $(G')^p\ne 1$.

Since $G'$ is powerful by Lemma \ref{blackburn}, the map $g(G')^p\longmapsto g^p(G')^{p^2}$ is an epimorphism from $G'/(G')^p$ to $(G')^p/(G')^{p^2}$.
Thus $(G')^p/(G')^{p^2}$ consists of the cosets $[x,u]^p(G')^{p^2}$ with $u\in C$.
Now if $u\in C$ then, by the Hall-Petresco identity, $[x,u^p]=[x,u]^p w$ for some $w\in (H')^p\gamma_p(H)$, where
$H=\langle (u^{-1})^x,u \rangle=\langle u, [x,u] \rangle$.
Then $H'\le [G,C,C]\le (G')^p$ and $(H')^p\le (G')^{p^2}$, and since $p$ is odd, also
$\gamma_p(H)\le [(G')^p,C]\le (G')^{p^2}$ by Lemma \ref{lemma C}.
Hence $[x,u]^p\equiv [x,u^p] \pmod{(G')^{p^2}}$ for every $u\in C$.
It follows that every element of $(G')^p$ is of the form $[x,u^p]$ modulo $(G')^{p^2}$ for some $u\in C$.

Now let us choose a subgroup $T$ between $(G')^p$ and $(G')^{p^2}$ with $|(G')^p:T|=p$.
Thus both $(G')^p/T$ and $T/(G')^{p^2}$ are cyclic, generated by the image of some commutator
$[x,u^p]$ with $u\in C$.
By Lemma \ref{lemma C}, we have $[x,u^p,u^p]\in [G,C^p,C^p]\le [(G')^p,C^p]\le (G')^{p^3}$.
Thus we can apply (ii) of Lemma \ref{lemma LN} with $L=(G')^p$ and $N=T$ to get $(G')^p\subseteq K_x(G)$, as desired.
\end{proof}

Now we are concerned with the proof of Theorem A for finite $2$-groups, which is quite more involved.
The main difficulty arises when $C=G$, and in order to deal with that case, we introduce the following subgroups.

\begin{definition}
\label{definition R}
Let $G$ be a finite $2$-group such that $(G')^2\ne 1$.
For every $U\maxG (G')^2$ we define the subgroup $R(U)$ by the condition
\[
R(U)/U = C_{G/U}(G^2/U).
\]
In other words, $R(U)$ is the largest subgroup of $G$ satisfying $[R(U),G^2]\le U$.
We set $R=\cup \{ R(U) \mid U\maxG (G')^2 \}$.
\end{definition}

\begin{lemma}
\label{lemma R}
Let $G$ be a finite $2$-group with $d(G')\le 2$.
Assume furthermore that $C=G$ and that $(G')^2\ne 1$.
Then the following hold:
\begin{enumerate}
\item
$[G,G^2]=(G')^2$.
\item
$[x,G^2]=(G')^2$ if and only if $x\not\in R$.
\item
$G^2\le R(U)<G$ for every $U\maxG (G')^2$.
\item
$R(U)\cap R(V)\le R(W)$ for every $U,V,W\maxG (G')^2$ with $U\ne V$.
\end{enumerate}
\end{lemma}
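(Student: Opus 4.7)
The plan is to handle the four parts in order, using throughout that $C=G$ gives $[G,G']\le (G')^2$ (so $\gamma_3(G)\le (G')^2$), that $G'$ is powerful by Lemma~\ref{blackburn}, and that $(G')^2$ is itself powerful with $d((G')^2)\le 2$; hence $\Phi((G')^2)=(G')^4$, which lies in every $U\maxG (G')^2$. Moreover, Lemma~\ref{lemma C}(i) yields $[(G')^2,G]\le (G')^4$, so $G$ acts trivially on the $\F_2$-space $(G')^2/(G')^4$; in particular, the $G$-maximal subgroups of $(G')^2$ are exactly the preimages of the hyperplanes of that space.

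For (i), the inclusion $[G,G^2]\le [G,G]^2[G,G,G]\le (G')^2\gamma_3(G)=(G')^2$ is routine. The reverse containment, which is the main obstacle, I would obtain by passing to $\bar G=G/[G,G^2]$: there $\bar G^2$ is central, so $\bar G/Z(\bar G)$ has exponent $2$ and is therefore abelian, forcing $\bar G$ to be of nilpotency class at most $2$. From the identity $[x,y^2]=[x,y]^2[x,y,y]$ together with $[\bar x,\bar y^2]=1$ and $\gamma_3(\bar G)=1$, one obtains $[\bar x,\bar y]^2=1$. Hence $\bar G'$ is generated by involutions and lies in $Z(\bar G)$, so it has exponent $2$, giving $(G')^2\le [G,G^2]$.

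Given (i), the remaining parts follow quickly. For (ii), the direction $x\in R\Rightarrow [x,G^2]\lneq (G')^2$ is definitional, while for $x\notin R$ the preliminary remark above says the image of $[x,G^2]$ in $(G')^2/(G')^4$ lies in no hyperplane, whence $[x,G^2](G')^4=(G')^2$; the Burnside basis theorem applied to the subgroup $[x,G^2]\le (G')^2$ then yields $[x,G^2]=(G')^2$. For (iii), $G^2\le R(U)$ reduces to $[G^2,G^2]\le U$: expanding $[g^2,h^2]=[g^2,h]^2[g^2,h,h]$, part (i) gives $[g^2,h]^2\in ((G')^2)^2=(G')^4$, while $[g^2,h,h]\in[(G')^2,G]\le (G')^4$ by Lemma~\ref{lemma C}(i), and $(G')^4\le U$. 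The strict inclusion $R(U)<G$ is forced by (i), since $R(U)=G$ would give $(G')^2=[G,G^2]\le U$. Finally (iv) follows from $[R(U)\cap R(V),G^2]\le U\cap V$: as $U$ and $V$ are distinct hyperplanes of $(G')^2$ containing $(G')^4$, their intersection is $(G')^4$, which lies in $W$.
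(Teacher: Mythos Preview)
Your argument is correct and tracks the paper's proof closely in parts (ii) and (iv). Two remarks on the other parts.

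For (i), your quotient argument via $\bar G=G/[G,G^2]$ is a perfectly good alternative. The paper instead notes that $[G,G^2]$ and $(G')^2$ coincide modulo $\gamma_3(G)$, and then absorbs $\gamma_3(G)$ into both sides: into $(G')^2$ because $C=G$, and into $[G,G^2]$ because $G'\le G^2$ (a group of exponent~$2$ being abelian) so that $\gamma_3(G)=[G',G]\le[G^2,G]$. This is marginally shorter but not conceptually different from what you do.

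For (iii), your elementwise approach needs two small patches. First, the identity $[g^2,h^2]=[g^2,h]^2[g^2,h,h]$ is only literally true once one observes that the relevant factors already lie in $(G')^2$, which is abelian modulo $(G')^4$; you should say this. Second, showing $[g^2,h^2]\in (G')^4$ for all $g,h$ gives $(G^2)'\le (G')^4$ only after remarking that the squares generate $G^2$ and that $(G')^4$ is normal, so the normal closure of the commutators of generators stays inside $(G')^4$. The paper avoids both points by working directly with subgroups: $[G^2,G^2]\le [G,G^2]^2\,[G,G^2,G]=(G')^4\,[(G')^2,G]=(G')^4$.
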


\begin{proof}
(i)
The subgroups $[G,G^2]$ and $(G')^2$ coincide modulo $\gamma_3(G)$.
Since groups of exponent $2$ are abelian, we have $G'\le G^2$, which implies that
$\gamma_3(G)\le [G,G^2]$.
On the other hand, $C=G$ implies that $\gamma_3(G)\le (G')^2$.
We conclude that $[G,G^2]=(G')^2$.

(ii)
If $[x,G^2]=(G')^2$ then obviously $x\not\in R$.
On the other hand, if $[x,G^2]\ne (G')^2$ then $[x,G^2]<(G')^2$ by (i).
Let $N=[x,G^2](G')^4$.
Then $N$ is a proper subgroup of $(G')^2$, and normal in $G$, since $[(G')^2,G]\le (G')^4$ by (i)
of Lemma \ref{lemma C}.
If we consider $U\maxG (G')^2$ containing $N$, then $x\in R(U)\subseteq R$.
This proves the result.

(iii)
By (i), $R(U)$ is a proper subgroup of $G$.
On the other hand, we have
\[
[G^2,G^2] \le [G,G^2]^2 [G,G^2,G] = (G')^4 [(G')^2,G] = (G')^4.
\]
Since $(G')^4\le U$, it follows that $G^2\le R(U)$. 

(iv)
This is clear from the definition of $R(U)$, since $d((G')^2)\le 2$ implies $U\cap V=(G')^4\le W$.
\end{proof}

The next result will allow us to complete easily the proof of Theorem A in the case when $p=2$ and $C=G$.
Its proof is long and technical, and it requires a careful analysis of the relative positions of the subgroups $D(T)$ and $R(U)$, where $T\maxG G'$ and $U\maxG (G')^2$.

\begin{proposition}
\label{proposition D cup R not G}
Let $G$ be a finite $2$-group with $d(G')=2$ and $C=G$.
Then there exists $x\in G$ such that $G'=[x,G]$ and $(G')^2=[x,G^2]$.
\end{proposition}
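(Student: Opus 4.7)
By Lemma \ref{lemma D} and Lemma \ref{lemma R}(ii), the statement is equivalent to finding $x\in G\setminus(D\cup R)$. If $(G')^2=1$, then $C=G$ forces $\gamma_3(G)\le(G')^2=1$, whence $[G,G^2]=1$ and the second condition is automatic, so Lemma \ref{lemma D}(ii) provides the required $x\notin D$; we therefore assume $(G')^2\ne 1$. Since every $D(T)$ and every $R(U)$ contains $\Phi(G)=G^2$, the problem descends to showing that the union of the subspaces $\bar D(T):=D(T)/\Phi(G)$ and $\bar R(U):=R(U)/\Phi(G)$ fails to cover the $\F_2$-vector space $W:=G/\Phi(G)$.

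To control the $\bar D(T)$'s, I would use the alternating $\F_2$-bilinear form $\bar c:W\times W\to G'/(G')^2\cong\F_2^2$ induced by $[\cdot,\cdot]$, which is well defined because $C=G$ gives $[G,\Phi(G)]\le(G')^2$. The subspace $\bar D(T)$ is the radical of the composite $\lambda_T\circ\bar c$, where $\lambda_T$ is the nonzero linear functional on $G'/(G')^2$ vanishing on $T/(G')^2$. By Lemma \ref{lemma D}(i), each $\bar D(T)$ has even codimension at least $2$, and the joint radical $\bigcap_i\bar D(T_i)$ has codimension at least $3$ in $W$ (otherwise $\bar c$ would be required to surject onto $\F_2^2$ from a quotient of dimension $\le 2$, impossible since $\dim\Lambda^2(\F_2^2)=1<2$).

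For the $\bar R(U)$'s I would use a secondary bilinear pairing $\bar\pi:W\times G^2/Y_0\to(G')^2/(G')^4$ induced by $[\cdot,\cdot]|_{G\times G^2}$, where $Y_0$ is the appropriate right kernel containing $(G')^2G^4$; composing with the functional $\mu_U:(G')^2/(G')^4\to\F_2$ killing $U/(G')^4$ presents $\bar R(U)$ as the left radical, which is proper by Lemma \ref{lemma R}(iii), and by Lemma \ref{lemma R}(iv) any two of the $\bar R(U_j)$'s meet in the common triple intersection. The two pairings are linked by the surjective squaring homomorphism $\mathrm{sq}:G'/(G')^2\to(G')^2/(G')^4$ (well defined since $G'$ is powerful with $G''\le(G')^4$, so $(ab)^2\equiv a^2b^2\pmod{(G')^4}$ on $G'$) together with the identity $[x,g^2]\equiv[x,g]^2[x,g,g]\pmod{(G')^4}$.

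Finally I would split according to $d((G')^2)\in\{1,2\}$. When $d((G')^2)=1$ there is only one $\bar R(U)$ to avoid, and the codimension bound on $\bigcap_i\bar D(T_i)$ combined with a direct count yields the required $x$. The main obstacle is the case $d((G')^2)=2$ and, within it, the extremal configuration in which all three $\bar R(U_j)$ are distinct hyperplanes of $W$: by Lemma \ref{lemma R}(iv) their pairwise intersections then coincide in a single codimension-$2$ subspace of $W$, and a routine inclusion-exclusion forces $\bar R(U_1)\cup\bar R(U_2)\cup\bar R(U_3)=W$, in which case no $x$ can be found. Ruling this extremal configuration out is the crux of the argument: three distinct hyperplane radicals of $\bar\pi$ would constrain the associated map $\rho:W\to\mathrm{Hom}(G^2/Y_0,\F_2^2)$ to have a very rigid rank-$2$ image of the form $\langle(f,0),(0,f)\rangle$, and the squaring identity above should then transfer this rigidity to $\bar c$ in a way incompatible with $\bar c$ surjecting onto $G'/(G')^2$ under $C=G$ and $d(G')=2$. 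Once this case is excluded, enough dimensional slack remains (using the codimension-$3$ bound on $\bigcap_i\bar D(T_i)$ from above) to produce $x\in W\setminus\bigl(\bigcup_i\bar D(T_i)\cup\bigcup_j\bar R(U_j)\bigr)$.
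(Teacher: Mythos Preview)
Your reformulation matches the paper's: both reduce the statement to $D\cup R\ne G$ and pass to $W=G/\Phi(G)$. The alternating form $\bar c$, the secondary pairing $\bar\pi$, and the squaring link $[x,g^2]\equiv[x,g]^2[x,g,g]\pmod{(G')^4}$ are precisely the tools the paper exploits. However, the outline has two genuine gaps.

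First, you omit the paper's opening reduction to $d(G)\le 3$: one replaces $G$ by $H=\langle a,b,c\rangle$ with $H'=G'$, and then shows $Z=G^2$ (where $Z/(G')^2=Z(G/(G')^2)$), so that $|W|=8$. Every subsequent count in the paper ($|\overline D|=4$ with $D$ a maximal subgroup; ``if no $R(U)$ is maximal then $|\overline D\cup\overline R|\le 7$''; etc.) is carried out in this concrete $8$-element space. Without that reduction your codimension bounds --- each $\bar D(T)$ of codimension $\ge 2$, each $\bar R(U)$ of codimension $\ge 1$, and $\bigcap_T\bar D(T)$ of codimension $\ge 3$ --- do not by themselves preclude $\bar D\cup\bar R=W$ when $\dim W$ is large, so ``enough dimensional slack remains'' is unsupported as stated.

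Second, even granting the reduction to $\dim W=3$, your identification of the ``crux'' is incomplete. You single out the configuration where all three $\bar R(U_j)$ are distinct hyperplanes and assert the remaining cases fall to counting. But the paper's Case~1 (where $R(W)\le R(U)$, so only two of the $R$'s are hyperplanes) contains a subcase --- $D\cap R(U)=D\cap R(V)$, which then equals $R(W)$ --- in which $D$, $R(U)$, $R(V)$ are three distinct hyperplanes through the line $\overline{R(W)}$, and hence $\overline D\cup\overline R=W$ by straight counting as well. This configuration must be eliminated by a separate contradiction: one shows $R(W)=D(T)$ for some $T\maxG G'$, picks $g\in R(W)\smallsetminus G^2$ and $h\in R(U)$ with $[g,h]\in T\smallsetminus (G')^2$, and then $[g,h^2]=[g,h]^2[g,h,h]$ forces $U=T^2$; running the same argument with $V$ gives $U=V$, absurd. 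Your ``rigidity'' sketch --- itself only heuristic (``should then transfer'') --- does not touch this configuration. The configuration you do isolate is essentially the paper's Case~2, which is dispatched by a different computation showing $[y,x]\in(G')^2$ for suitable $x,y$ and hence $G'$ cyclic, a contradiction.
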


\begin{proof}
We know that $G'=[a,G]=\langle [a,b], [a,c] \rangle$ for some $a,b,c\in G$.
If we set $H=\langle a,b,c \rangle$ then $H'=G'$, and the result immediately follows for $G$ once it is proved for $H$, taking into account that $[G^2,G]=(G')^2$ by (i) of Lemma \ref{lemma R}.
Thus we may assume that $d(G)\le 3$ and, obviously, also that $(G')^2\ne 1$.

In the remainder of the proof, let $Z/(G')^2$ be the centre of $G/(G')^2$.
Observe that $|G:Z|>4$, since otherwise the derived subgroup of $G/(G')^2$ is cyclic, and consequently $G'$ is cyclic.
Since, again by (i) of Lemma \ref{lemma R}, we have $\Phi(G)=G^2\le Z$, it follows that $|G:G^2|=8$ and that $Z=G^2$.

By Lemmas \ref{lemma D} and \ref{lemma R}, it suffices to show that $D\cup R$ does not cover the whole of $G$, since then any $x\in G\smallsetminus (D\cup R)$ satisfies both $G'=[x,G]$ and $(G')^2=[x,G^2]$.
Since $G^2\le D(T),R(U)$ for all $T\maxG G'$ and $U\maxG (G')^2$, we can prove the non-covering property by working in the group $G/G^2$ of order $8$.
Thus, if we use the bar notation in this factor group, then we need to prove that
$|\overline D\cup \overline R|\le 7$.
We do this by first determining the order of $\overline D$ and then analysing the position of the subgroups $R(U)$ with respect to $D$ and among themselves.
Before proceeding, observe that the sections $G'/(G')^2$ and $(G')^2/(G')^4$ are central in $G$ by
Lemma \ref{lemma C}, since $C=G$.
Hence the conditions $T\maxG G'$ and $U\maxG (G')^2$ are equivalent in this case to $T\max G'$ and
$U\max (G')^2$, respectively.

We claim that $|\overline D|=4$ and that $D$ is a maximal subgroup of $G$.
Let us consider an arbitrary $T\maxG G'$, and observe that there are three choices for $T$, since $d(G')=2$.
First of all, note that $|\overline{D(T)}|=2$, since $\log_2 |G:D(T)|$ is even and $D(T)$ is proper in $G$ by Lemma \ref{lemma D}.
Thus $D(T)'=[D(T),G^2]\le (G')^2$.
Now let $S\maxG G'$ with $S\ne T$.
Then $[D(S),D(T)]\le S\cap T\le (G')^2$, and as a consequence $\langle D \rangle'\le (G')^2$.
Also, if $D(S)=D(T)$  then $[D(T),G]\le (G')^2$ and $D(T)\le Z=G^2$.
Hence $|\overline{D(T)}|=1$, which is a contradiction.
Thus $\overline D$ is the union of three different subgroups of order $2$, and $|\overline D|=4$.
Since $D\subseteq \langle D \rangle \le G$ and $\langle D \rangle'\ne G'$, it follows that $D$ is a maximal subgroup of $G$, as claimed.

Now we start the analysis of the position of the subgroups of the form $R(U)$.
Since $G'$ is a $2$-generator powerful group, we have $|(G')^2:(G')^4|\le 4$.
Hence $(G')^2$ has at most 3 maximal subgroups, and the intersection of two different maximal subgroups is $(G')^4$.
By (iii) of Lemma \ref{lemma R}, all the $R(U)$ are proper in $G$, and if none of them is maximal in $G$, we immediately get $|\overline D\cup \overline R|\le 7$.
The same conclusion holds if $R(U)=D$ whenever $R(U)\max G$.
Thus we may assume that there exists $U\maxG (G')^2$ such that $R(U)\max G$,
i.e.\ such that $|\overline{R(U)}|=4$, and furthermore $R(U)\ne D$.

Then $|\overline D \cup \overline{R(U)}|=6$, and we may also assume that there exists another
$V\maxG (G')^2$ such that $R(V)\not\subseteq D\cup R(U)$.
This implies in particular that $d((G')^2)=2$, and $(G')^2$ has exactly 3 maximal subgroups.
Also, since $G'$ is powerful, the square map induces an isomorphism between $G'/(G')^2$ and
$(G')^2/(G')^4$.
As a consequence,
\begin{equation}
\label{eqn square injective}
g\in G'\smallsetminus (G')^2
\
\Longrightarrow
\
g^2\in (G')^2\smallsetminus (G')^4,
\end{equation}
and also all three maximal subgroups of $(G')^2$ are of the form $T^2$, where $T\maxG G'$.

Let $W$ be the third maximal subgroup of $(G')^2$, apart from $U$ and $V$.
If $\overline{R(W)}=\overline 1$ then, since $R(U)\cap R(V)\le R(W)$ by (iv) of Lemma \ref{lemma R}, it follows that $|\overline{R(V)}|\le 2$ and consequently $|\overline D\cup \overline R|\le 7$.
Hence we may assume that $|\overline{R(W)}|\ge 2$.

Now we consider two separate cases:

\vspace{5pt}

\textit{Case 1: $R(W)\le R(U)$.}

\vspace{5pt}

Again by (iv) of Lemma \ref{lemma R}, we get $R(W)=R(U)\cap R(W)<R(V)$, with proper inclusion since
$R(V)\not\le R(U)$.
In particular, $|\overline{R(W)}|=2$ and $|\overline{R(V)}|=4$, which implies that $|\overline R|=6$.

Assume first that $D\cap R(U)\ne D\cap R(V)$.
In this case we have $|\overline D\cap \overline R|\ge 3$ and hence $|\overline D\cup \overline R|\le 7$, as desired.
\begin{figure}[h]
\centering
\begin{tikzpicture}[every node/.style={fill=white}] % white fill keeps lines from running into labels
	\draw (0,5) -- (-2,4) -- (-2,3) -- (0,2);
	\draw (-2,4) -- (0,3) -- (0,2);	
	\draw (0,5) -- (0,4) -- (-2,3);
	\draw (0,4) -- (2,3);
	\draw (0,5) -- (2,4) -- (0,3);
	\draw (2,4) -- (2,3) -- (0,2);

	\node at (0,5) {$G$};
	\node at (-2,4) {$D$};
	\node at (0,4) {$R(U)$};
	\node at (2,4) {$R(V)$};
	\node at (-2,3) {$D\cap R(U)$};
	\node at (0,3) {$D\cap R(V)$};
	\node at (2,3) {$R(W)$};
	\node at (0,2) {$G^2$};
\end{tikzpicture}
\caption{The case $D\cap R(U)\ne D\cap R(V)$.}
\end{figure}

Suppose now that $D\cap R(U)=D\cap R(V)$, so that this intersection coincides with $R(W)$.
Now, by the fourth paragraph of the proof, $\overline D$ has three subgroups of order $2$, which are all of the form $\overline{D(T)}$.
Thus $R(W)=D(T)$ for some $T\maxG G'$.
\begin{figure}[h]
\centering
\begin{tikzpicture}[every node/.style={fill=white}] % white fill keeps lines from running into labels
	\draw (0,5) -- (0,4) -- (0,3) -- (0,2);
	\draw (0,5) -- (-2,4) -- (0,3);	
	\draw (0,5) -- (2,4) -- (0,3);

	\node at (0,5) {$G$};
	\node at (-2,4) {$D$};
	\node at (0,4) {$R(U)$};
	\node at (2,4) {$R(V)$};
	\node at (0,3) {$R(W)=D(T)$};
	\node at (0,2) {$G^2$};
\end{tikzpicture}
\caption{The case $D\cap R(U)=D\cap R(V)$.}
\end{figure}
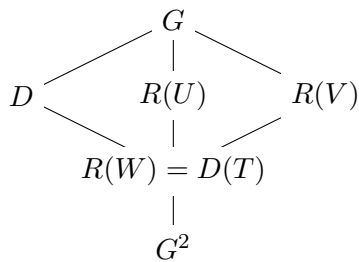

Choose $g\in R(W)\smallsetminus G^2$.
Then $[g,G]$ is contained in $T$, but since $Z=G^2$, it is not contained in $(G')^2$.
Since $G=\langle R(U), D \rangle$ and $[g,D]\le D'\le (G')^2$, there exists $h\in R(U)$ such that
$[g,h]\in T\smallsetminus (G')^2$.
Now, we have
\[
[g,h^2]\in [R(W),G^2]\le [R(U),G^2]\cap [R(V),G^2]\le U\cap V\le (G')^4,
\]
and, on the other hand,
\[
[g,h^2] = [g,h]^2 [g,h,h],
\]
where $[g,h]^2\in T^2\smallsetminus (G')^4$ by (\ref{eqn square injective}),
and $[g,h,h]\in [h,G']\le [h,G^2]\le U$.
Thus necessarily $U=T^2$.
Since the same argument can be applied with $V$ in the place of $U$, we deduce that $U=V$, which is a contradiction.

\vspace{5pt}

\textit{Case 2: $R(W)\not\le R(U)$.}

\vspace{5pt}

We are going to prove that this case is impossible.
Choose an element $x\in R(V)\smallsetminus (D\cup R(U))$.
Then $G=R(U) \cup xR(U)$.
Since $R(W)\not\le R(U)$, there exists $y\in R(U)$ such that $xy\in R(W)$.
Note that $y\not\in G^2$, since otherwise $xy\in R(V)\cap R(W)\le R(U)$.
Now $[xy,x^2]=[y,x^2]\in W\cap U=(G')^4$, and then
\[
[y,x]^2 = [y,x^2] [y,x,x]^{-1} \in V,
\]
since $[y,x,x]\in [G^2,x]$.
By using that $[yx,y^2]=[x,y^2]$ and that $yx=(xy)^x\in R(W)$, we obtain in the same way that
$[y,x]^2\in U$.
Hence $[y,x]^2\in (G')^4$ and then, by (\ref{eqn square injective}), we get $[y,x]\in (G')^2$.

On the other hand, $x\not\in D$ yields
\[
G' = [x,G] = [x,\langle x \rangle R(U)] = [x,R(U)].
\]
Since $y\not\in G^2$ and $|\overline{R(U)}|=4$, we can write $R(U)=\langle y,z,G^2 \rangle$ for some $z$.
Now, since $[x,y]\in (G')^2$ and $[x,G^2]\le (G')^2$, it follows that $G'=\langle [x,z], (G')^2 \rangle$.
This implies that $G'$ is cyclic, which is a contradiction.
\end{proof}

We can now proceed to prove Theorem A for the prime $2$.

\begin{theorem}
Let $G$ be a finite $2$-group, and assume that $d(G')\le 2$.
Then $G'=K_x(G)$ for a suitable $x\in G$.
\end{theorem}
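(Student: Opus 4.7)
The plan is to follow the strategy of Theorem~\ref{theorem p odd}, splitting into two cases according to whether $C=G$ or $C<G$. As in the odd-prime case, we may assume $(G')^2\ne 1$; otherwise, picking any $x\notin D$ (which exists by Lemma~\ref{lemma D}) gives $G'/(G')^2\subseteq K_{x(G')^2}(G/(G')^2)$ via Lemma~\ref{lemma central}, and Lemma~\ref{lemma index 2} then finishes the proof.

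\emph{Case $C=G$.} Proposition~\ref{proposition D cup R not G} furnishes $x\in G$ with both $G'=[x,G]$ and $(G')^2=[x,G^2]$. The first identity, combined with Lemma~\ref{lemma central} (whose hypothesis $[G',G]\le (G')^2$ holds because $C=G$), yields $G'/(G')^2\subseteq K_{x(G')^2}(G/(G')^2)$; by Lemma~\ref{lemma index 2}, we reduce to showing $(G')^2\subseteq K_x(G)$. For this, I would apply Lemma~\ref{lemma LN}(ii) with $L=(G')^2$ and $N=T$ for some $T$ with $(G')^4\le T\maxG (G')^2$ and $|L:N|=2$. The identity $(G')^2=[x,G^2]$ provides elements $g,h\in G^2$ whose commutators with $x$ generate the cyclic sections $L/N$ and $N/L^2$, and the commutator hypotheses hold because $[x,g,g],[x,h,h]\in [(G')^2,G^2]\le (G')^8$ by Lemma~\ref{lemma C}(i) with $C=G$, and $(G')^8$ is contained in both $N^2$ and $L^4$.

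\emph{Case $C<G$.} Then $|G:C|=2$ by Lemma~\ref{lemma C}(iii), and I would pick $x\in G\smallsetminus C$, which automatically lies outside $D\subseteq C$; thus $G'=[x,G]=[x,C]$, and the same style of argument (Lemma~\ref{lemma central} with $S=C$, followed by Lemma~\ref{lemma index 2}) leaves us to prove $(G')^2\subseteq K_x(G)$. The plan is again to apply Lemma~\ref{lemma LN}(ii) with $L=(G')^2$, now trying generators of the form $c^2$ for $c\in C$: expanding gives $[x,c^2]=[x,c]^2[x,c,c]\in (G')^2$ (the second factor being in $[G',C]\le (G')^2$ by the definition of $C$), and $[x,c^2,c^2]\in [(G')^2,C^2]\le (G')^8$ by Lemma~\ref{lemma C}(i), so the commutator conditions of Lemma~\ref{lemma LN}(ii) are satisfied.

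The hard part, peculiar to Case $C<G$, is exactly what breaks the Hall--Petresco argument of Theorem~\ref{theorem p odd} at the prime $2$: the coset $[x,c^2](G')^4$ need not equal $[x,c]^2(G')^4$, because the error term $[x,c,c]$ lies only in $(G')^2$ and not in $(G')^4$. To overcome this I would study the map $\psi\colon C^2\to (G')^2/(G')^4$, $u\mapsto [x,u](G')^4$, which is a homomorphism thanks to $(G')'\le (G')^4$ (from Blackburn) together with Lemma~\ref{lemma C}(i), and show that its image is large enough to provide the two generators required by Lemma~\ref{lemma LN}(ii) for some admissible $T\maxG (G')^2$. Here I expect the $2$-generation of $G'$ together with the surjectivity of squaring $G'/(G')^2\to (G')^2/(G')^4$ to constrain matters enough that $\psi$ cannot miss a full maximal subgroup of $(G')^2/(G')^4$, thereby yielding the required covering.
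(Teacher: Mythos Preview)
Your treatment of the case $C=G$ is correct and essentially identical to the paper's.

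The gap is in the case $C<G$. You commit at the outset to $x\in G\smallsetminus C$ and to applying Lemma~\ref{lemma LN}(ii) with $L=(G')^2$, and you correctly identify that the obstruction is the error term $[x,c,c]\in[G',C]\le (G')^2$, which need not lie in $(G')^4$. But your final paragraph only expresses the hope that the image of $\psi$ is not contained in a single maximal subgroup of $(G')^2/(G')^4$; you give no argument for this, and nothing you have written rules it out. The surjectivity of squaring $G'/(G')^2\to (G')^2/(G')^4$ tells you that the ``main terms'' $[x,c]^2$ cover $(G')^2/(G')^4$, but once you add the uncontrolled error $[x,c,c]$ the image of $c\mapsto [x,c^2](G')^4$ can collapse. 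So as written, this case is incomplete.

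The paper organises the case $C<G$ quite differently and does not attempt to work with $L=(G')^2$. The key observation is that here $\gamma_3(G)\not\le (G')^2$, so $T=\gamma_3(G)(G')^2$ is the \emph{unique} subgroup with $T\maxG G'$; in particular $D=D(T)$ is a subgroup and $C'\le T$. The proof then applies Lemma~\ref{lemma LN}(ii) either with $(L,N)=(G',T)$ or with $(L,N)=(T,(G')^2)$, according to three sub-cases determined by whether $[G',C]\le T^2$ and whether $[T,G]\le T^2$. Two features are worth noting. First, the generators used are not of the form $c^2$: for instance, when $[G',C]\le T^2$ one takes $g=y\in C$ and $h=[x,y]\in G'$, so that $[x,g,g]\in[G',C]\le T^2$ and $[x,h,h]\in G''\le (G')^4$ automatically. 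Second, in the sub-case $[G',C]\not\le T^2$ and $[T,G]\le T^2$ the paper abandons the choice $x\notin C$ altogether: since $[G',D]\le[T,G]\le T^2$ but $[G',C]\not\le T^2$, one is forced to take $x\in C\smallsetminus D$ (and $g\in G'$) to get $(G')^2/T^2=\langle[x,g]T^2\rangle$. Your fixed choice $x\notin C$ would not obviously survive this sub-case, and this is precisely where your heuristic about $\psi$ is most suspect.
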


\begin{proof}
We may assume that $d(G')=2$.
We split the proof into two cases:

\vspace{5pt}

\noindent
\textit{Case 1: $C\ne G$.}

\vspace{5pt}

Let $T=\gamma_3(G)(G')^2$.
The condition $C\ne G$ implies that $\gamma_3(G)\not\le (G')^2$ and, as a consequence,
$(G')^2<T<G'$.
Thus $T\maxG G'$.
On the other hand, if $U\maxG G'$ then clearly $\gamma_3(G)(G')^2\le U$ and so $T=U$.
Hence if $N\trianglelefteq G$ and $N<G'$ then $N\le T$.
Also $D=D(T)$ is a subgroup of $G$ and $[D,G]\le T$.
On the other hand,
\[
[G,C'] \le [G,C,C] \le [G',C] \le (G')^2,
\]
while $[G,G']\not\le (G')^2$.
It follows that $C'<G'$, and consequently $C'\le T$.

By Lemma \ref{lemma D}, we have $G'=[x,G]$ for every $x\not\in D$.
We are going to show that Lemma \ref{lemma LN} can be applied either with $L=G'$ and $N=T$ or with $L=T$ and $N=(G')^2$, depending on the values of some commutator subgroups.
In the latter case, Lemma \ref{lemma index 2} will complete the proof.

Suppose first that $[G',C]\le T^2$.
Then we take $x\not\in C$, which by Lemma \ref{lemma D} implies $x\not\in D$.
Hence $G=\langle x \rangle C$ and $G'/T=\langle [x,y]T \rangle$ for some $y\in C$.
Also, observe that $T/(G')^2=\langle [x,[x,y]] (G')^2 \rangle$.
Now we have $[x,y,y]\in [G',C]\le T^2$ and $[x,[x,y],[x,y]]\in [G',G']\le (G')^4$, since $G'$ is powerful.
Thus we are done in this case.

Therefore we assume that $[G',C]\not\le T^2$ in the remainder.
Observe that $|(G')^2:T^2|\le |G':T|=2$ by Lemma \ref{lemma powerful}.
Since $[G',C]$ is contained in $(G')^2$ but not in $T^2$, we have $|(G')^2:T^2|=2$.
Also $|T^2:(G')^4|\le 2$.

If $[T,G]\not\le T^2$  then since $G=\langle G\smallsetminus C\rangle$ we can choose $x\not\in C$ and $t\in T$ such that $(G')^2/T^2 = \langle [x,t] T^2 \rangle$.
Since $[x,t,t]\in [(G')^2,G']\le (G')^8\le T^4$ and we can argue with the chief factor $T/(G')^2$ as in the case $[G',C]\le T^2$, the result follows also in this case.

Suppose finally that $[T,G]\le T^2$.
Since $[G',C]\not\le T^2$ and
\[
[G',D] \le [D,G,G] \le [T,G] \le T^2,
\]
there exist $x\in C\smallsetminus D$ and $g\in G'$ such that $(G')^2/T^2=\langle [x,g]T^2 \rangle$.
Then $[x,g,g]\in [(G')^2,G']\le T^4$.
On the other hand, there exists $y\in G$ such that $G'/T=\langle [x,y]T \rangle$.
Since $C'\le T$, we have $y\not\in C$ and then $T/(G')^2=\langle [x,y,y] (G')^2 \rangle=\langle [x,y^2] (G')^2 \rangle$.
Now
\[
[x,y^2,y^2] \in [T,G^2] \le [T,G]^2 [T,G,G] \le [T,G]^2 [T^2,G] \le (G')^4,
\]
which completes the proof in this case.

\vspace{8pt}

\noindent
\textit{Case 2: $C=G$.}

\vspace{5pt}

By Proposition \ref{proposition D cup R not G}, there exists $x\in G$ such that $G'=[x,G]$ and
$(G')^2=[x,G^2]$.
Since $C=G$ implies that the sections $G'/(G')^2$ and $(G')^2/(G')^4$ are central in $G$, it follows from Lemma \ref{lemma central} that
\[
G'/(G')^2=K_{x(G')^2}(G/(G')^2)
\]
and
\[
(G')^2/(G')^4=K_{x(G')^4}(G^2/(G')^4).
\]
On the other hand, by Lemma \ref{lemma C}, we have
\[
[x,G^2,G^2] \le [(G')^2,G^2] \le (G')^8.
\]
Hence we can apply Lemma \ref{lemma LN} with $L=(G')^2$ and any $N\maxG (G')^2$, getting
$(G')^2\subseteq K_x(G)$.
Now we are done by applying Lemma \ref{lemma index 2}.
\end{proof}

We conclude by proving Theorem B.

\begin{proof}[Proof of Theorem B]
By Theorem A, for every $N\trianglelefteq_{\mathrm{o}} G$, all elements of the derived subgroup $(G/N)'$ can be written as commutators from a single element.
Let $X_N=\{x\in G \mid (G/N)'=K_{xN}(G/N)\}$, which is closed in $G$, being a union of cosets of $N$.
Clearly, the family $\{ X_N \}_{N\trianglelefteq_{\mathrm{o}} G}$ has the finite intersection property and, since $G$ is compact, $\cap_{N\trianglelefteq_{\mathrm{o}} G} \, X_N \ne \varnothing$.
If $x$ belongs to this intersection, then $(G/N)'=K_{xN}(G/N)$ for all $N\trianglelefteq_{\mathrm{o}} G$, and then
\[
G'
=
\overline{G'}
=
\cap_{N\trianglelefteq_{\mathrm{o}} G} \ G' N
=
\cap_{N\trianglelefteq_{\mathrm{o}} G} \ K_x(G) N
=
\overline{K_x(G)}
=
K_x(G),
\]
as desired.
Observe that the last equality follows since $K_x(G)$ is the image of $G$ under the continuous map
$g\mapsto x^{-1}g^{-1}xg$.
Thus $K_x(G)$ is compact in the Hausdorff space $G$, and is consequently closed in $G$.
\end{proof}

\end{document}